\newtheorem{theorem}{Theorem}
\newtheorem{lemma}[theorem]{Lemma}
\newtheorem{proposition}[theorem]{Proposition}
\theoremstyle{definition}
\newtheorem{remark}[theorem]{Remark}
\DeclareMathOperator{\End}{End}
\DeclareMathOperator{\Hom}{Hom}
\renewcommand{\Im}{\operatorname{Im}}
\DeclareMathOperator{\Ker}{Ker}
\DeclareMathOperator{\Mod}{Mod}
\newcommand{\1}{\mathds{1}}
\newcommand{\C}{\mathcal{C}}
\newcommand{\modlf}{\operatorname{mod}_{lf}}
\renewcommand{\S}{\mathcal{S}}
\newcommand{\set}[1]{\left\{#1\right\}}
\newcommand{\op}{\mathrm{op}}
\newcommand{\T}{\mathcal{T}}
\newcommand{\Z}{\mathbb{Z}}
\title[Decompositions of locally finite endo-length modules]
  {Decompositions of locally finite endo-length modules
  over a skeletally small category}
\author{Pengjie Jiao}
\address{
  College of Sciences,
  China Jiliang University,
  Hangzhou 310018, PR China}
\email{jiaopjie@mail.ustc.edu.cn}
\subjclass[2010]{16D70}
\keywords{locally finite endo-length module,
  indecomposable decomposition}
\date{\today}
\begin{document}

\begin{abstract}
  Given a skeletally small category $\mathcal{C}$, we show that any locally finite endo-length $\mathcal{C}$-module is the direct sum of indecomposable $\mathcal{C}$-modules, whose endomorphism algebra is local.
\end{abstract}

\maketitle

\section{Introduction}

Recently, the category of locally finite dimensional representations of a strongly locally finite quiver over a field is studied in \cite{BautistaLiuPaquette2013Representation}.
But this category needs not be Hom-finite or Krull-Schmidt in general. Hence, given an object, the existence of indecomposable decompositions needs to be shown.

Analogous to the above case, the category of locally finitely generated modules over a locally bounded ring $R$ with enough idempotents is studied in \cite{AngeleriHugeldelaPena2009Locally}.
It is proved that each locally finitely generated $R$-module admits an indecomposable decomposition; see \cite[Main Theorem]{AngeleriHugeldelaPena2009Locally}.

More generally, let $k$ be a commutative ring and $\C$ be a skeletally small category.
We find that we can consider the indecomposable decompositions of locally finite endo-length $\C$-modules.
Here, a $\C$-module $M$ is called locally finite endo-length if $M(a)$ is a finite length $k \End_\C (a)$-module for any $a \in \C$.

Indeed, we have the following decomposition theorem, which is a generalization of
\cite[Main Theorem(a)]{AngeleriHugeldelaPena2009Locally}.

\begin{theorem}\label{thm:dec}
  Let $\C$ be a skeletally small category, and $M$ be a locally finite endo-length $\C$-module. Then there exists a decomposition $M = \bigoplus_{i \in \Lambda} M_i$ such that each $M_i$ is indecomposable and $\End_{\Mod \C} (M_i)$ is local.
\end{theorem}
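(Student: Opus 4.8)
\emph{Plan.} The strategy is to verify a standard Zorn-type criterion and then, exploiting the finite-length hypothesis one object at a time, to check its two inputs; the real work will lie in one of them. First I would record the following general fact: a $\C$-module $M$ admits a decomposition as in the theorem provided (a)~every local direct summand of $M$ is a direct summand of $M$ --- where a family $\{N_j\}_{j\in\Lambda}$ of submodules with $\sum_{j}N_j=\bigoplus_j N_j$ is a \emph{local direct summand} if $\bigoplus_{j\in F}N_j$ is a direct summand of $M$ for every finite $F\subseteq\Lambda$ --- and (b)~every nonzero direct summand of $M$ has a nonzero direct summand whose endomorphism algebra is local. Granting (a) and (b), one applies Zorn's lemma to the collection of local direct summands of $M$ all of whose members have local endomorphism algebra, ordered by inclusion of index sets (the union of a chain of these is again one of them), producing a maximal such family $\{M_i\}_{i\in\Lambda}$. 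By (a), $\bigoplus_{i\in\Lambda}M_i$ is a direct summand, say $M=\bigl(\bigoplus_{i\in\Lambda}M_i\bigr)\oplus N$; if $N\neq 0$, then by (b) it has a nonzero direct summand with local endomorphism algebra, which could be adjoined to $\{M_i\}_{i\in\Lambda}$ against maximality. Hence $N=0$, and since a module with local endomorphism algebra is indecomposable, $M=\bigoplus_{i\in\Lambda}M_i$ is of the required form.

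\emph{Verifying (b).} Every direct summand of $M$ is again locally finite endo-length, since evaluation at an object preserves direct summands. I would next show that an indecomposable locally finite endo-length $\C$-module $N$ has local endomorphism algebra. Given $\phi\in\End_{\Mod\C}(N)$, for each $a\in\C$ the component $\phi_a$ is an endomorphism of the finite-length $k\End_\C(a)$-module $N(a)$, so by Fitting's lemma $N(a)=I(a)\oplus K(a)$ with $I(a)=\bigcap_n\Im\phi_a^n$ and $K(a)=\bigcup_n\Ker\phi_a^n$, where $\phi_a$ is invertible on $I(a)$ and nilpotent on $K(a)$. The subfunctors $I=\bigcap_n\Im\phi^n$ and $K=\bigcup_n\Ker\phi^n$ of $N$ have these as their values, so the canonical map $I\oplus K\To N$ is an isomorphism; as $N$ is indecomposable, either $N=I$, so every $\phi_a$ is invertible and $\phi$ is an automorphism of $N$, or $N=K$, so every $\phi_a$ is nilpotent and $1-\phi$ is an automorphism of $N$. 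Thus $\End_{\Mod\C}(N)$ is local. For (b) itself, let $0\neq N$ be a direct summand of $M$, fix $a\in\C$ with $N(a)\neq 0$, and let $\ell$ be the least $k\End_\C(a)$-length of $N'(a)$ over direct summands $N'$ of $N$ with $N'(a)\neq 0$. By a Zorn's lemma argument --- whose chains have lower bounds because, by the descending chain condition in the finite-length module $N(a)$, the intersection of a descending chain of such summands is again a direct summand of $N$ with value of length $\ell$ at $a$ --- choose among the summands attaining $\ell$ one, $N_0$, minimal with respect to inclusion. If $N_0=A\oplus B$ with $A,B\neq 0$, then $A(a)$ and $B(a)$ cannot both be nonzero (that would give a direct summand of $N$ with strictly smaller nonzero length at $a$), so say $B(a)=0$; then $A$ is a direct summand of $N$ with $A(a)=N_0(a)$ of length $\ell$ and $A\neq N_0$, contradicting minimality of $N_0$. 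Hence $N_0$ is indecomposable, and so has local endomorphism algebra.

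\emph{Verifying (a), and the main obstacle.} Let $\{N_j\}_{j\in\Lambda}$ be a local direct summand of $M$ and put $N=\bigoplus_{j\in\Lambda}N_j$. Since $M(a)$ has finite length over $k\End_\C(a)$, only finitely many $N_j(a)$ are nonzero for each $a\in\C$, so $N(a)$ is always a finite partial sum. I would then try to exhibit an idempotent $e\in\End_{\Mod\C}(M)$ with image $N$: after well-ordering $\Lambda$, one builds by transfinite recursion an ascending chain of idempotents $e_\gamma\in\End_{\Mod\C}(M)$ whose images are the partial sums of the first $\gamma$ of the $N_j$. At a limit stage, $e_\gamma$ is assembled objectwise from the previously constructed $e_\delta$; this is legitimate precisely because only finitely many $N_j$ are nonzero at any given object, so the components $(e_\delta)_a$ stabilize and patch into a natural transformation. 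At a successor stage, one must enlarge a retraction onto a direct summand by a single further summand $N_{j_\gamma}$. This successor step is the crux: such an extension is not available for arbitrary local direct summands, and it is exactly here that the hypothesis that $M$ itself --- and not merely each $N_j$ --- be locally finite endo-length has to be brought to bear, through a careful objectwise argument. I expect this to be the principal technical difficulty. Once it is in place, one obtains an idempotent $e$ with image $N$, so $N$ is a direct summand of $M$; combining the three steps then yields the decomposition $M=\bigoplus_{i\in\Lambda}M_i$ with each $M_i$ indecomposable and $\End_{\Mod\C}(M_i)$ local.
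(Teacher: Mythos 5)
Your reduction of the theorem to the two conditions (a) and (b) is a legitimate strategy and genuinely different from the paper's, and your verification of (b)'s second half is essentially the paper's Proposition~\ref{prop:indec}: the Fitting's lemma argument for locality of the endomorphism algebra of an indecomposable is correct. But the proof as a whole has a genuine gap, and you have flagged it yourself: condition (a) is not established. The definition of a local direct summand only guarantees that \emph{finite} partial sums $\bigoplus_{j\in F}N_j$ split off $M$; at a successor stage of your transfinite recursion you must pass from knowing that an already \emph{infinite} partial sum $P=\bigoplus_{j<\gamma}N_j$ is a direct summand to knowing that $P\oplus N_{j_\gamma}$ is one, and nothing in the definition supplies the required extension of the retraction. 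Announcing this step as ``the principal technical difficulty'' and expecting it to work out is not a proof; as written, the argument for (a) is missing its essential ingredient. A secondary gap of the same nature occurs inside (b): you justify that the intersection of a descending chain of direct summands of $N$ is again a direct summand ``by the descending chain condition in $N(a)$,'' but finiteness of the length of $N(a)$ only makes the values at the single object $a$ stabilize; it does not by itself produce a global complement for the intersection.

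Both gaps close once you carry the complements along inside the Zorn poset, which is exactly the paper's device. The paper orders pairs $(I,Y)$ with $M=\left(\bigoplus_{X\in I}X\right)\oplus Y$ by $I\subseteq I'$ and $Y\supseteq Y'$. A chain of such pairs gives an ascending chain of summands together with a descending chain of complements, and Lemma~\ref{lem:dec} shows $M=\left(\bigcup_i X_i\right)\oplus\left(\bigcap_i Y_i\right)$: at each object $a$ both chains of values stabilize at a common index because $M(a)$ has finite length over $k\End_\C(a)$, so the decomposition is verified pointwise. The successor step then becomes trivial, since one splits a new indecomposable summand off the \emph{current complement} $Y$ (using the analogue of your (b), the paper's Lemma~\ref{lem:indec.summand}) rather than trying to enlarge a retraction onto an infinite sum. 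The same bookkeeping repairs your descending-chain argument: by modularity one can choose the complements of a descending chain of summands to form an ascending chain, and then Lemma~\ref{lem:dec} applies. I would recommend replacing condition (a) entirely by this mechanism; the statement that every local direct summand of a locally finite endo-length module is a direct summand may well be true, but your argument does not prove it, and the theorem does not need it.
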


\section{Proof of the main result}

Let $k$ be a commutative ring and $\C$ be a skeletally small category. We denote by $\Mod k$ the category of $k$-modules.

Recall that a $\C$-module $M$ means a covariant functor $M \colon \C \to \Mod k$.
We mention that $M$ will be required to be $k$-linear if $\C$ is.
A morphism $f \colon M \to N$ of $\C$-modules means a natural transformation. In other words, it consists of a collection of morphisms $f_a \colon M(a) \to N(a)$ for any $a \in \C$, such that $N(\alpha) \circ f_a = f_b \circ M(\alpha)$ for any morphism $\alpha \colon a \to b$ in $\C$.
We denote by $\Mod \C$ the category of $\C$-modules.

Let $M$ be a $\C$-module.
For each $a \in \C$, we have the $k$-algebra morphism
$k \End_\C (a) \to \End_k (M(a))$ induced by $M$.
We mention that $k \End_\C (a)$ should be replaced by $\End_\C (a)$ if $\C$ is $k$-linear.
Then $M(a)$ becomes a $k \End_\C (a)$-module. Given a morphism $f \colon M \to N$, the $k$-linear map $f_a \colon M(a) \to N(a)$ is a morphism of $k \End_\C (a)$-modules.

Recall that a $\C$-submodule $N$ of $M$ is a $\C$-module such that $N(a) \subseteq M(a)$ for any $a \in \C$ and $N(\alpha)$ is the restriction of $M(\alpha)$ for any morphism $\alpha$ in $\C$. The factor $\C$-module $M/N$ means a $\C$-module such that $(M/N) (a) = M(a)/N(a)$ for any $a \in \C$ and $(M/N) (\alpha)$ is induced by $M(\alpha)$.

Given a morphism $f \colon M \to N$ in $\Mod \C$, the kernel $\Ker f$ and image $\Im f$ are defined pointwisely. Given a set $\set{M_i \middle\vert i \in I}$ of $\C$-modules, the direct product $\prod_{i \in I} M_i$ and direct sum $\bigoplus_{i \in I} M_i$ are also defined pointwisely.
It is well known that $\Mod \C$ is an abelian category admitting direct products and direct sums.

Let $\Omega$ be a chain of submodules of some $\C$-module $M$. We mention that $\bigcup_{X \in \Omega} X$ and $\bigcap_{X \in \Omega} X$ are submodules of $M$.

We call a $\C$-module $M$ \emph{locally finite endo-length} if $M(a)$ is of finite length as $k \End_\C (a)$-module for any $a \in \C$. We denote by $\modlf \C$ the full subcategory of $\Mod \C$ formed by locally finite endo-length $\C$-modules. We observe that $\modlf \C$ is an abelian subcategory of $\Mod \C$, which is closed under extensions.

We mention the following observation.

\begin{lemma}\label{lem:chain}
  Let $M$ be a locally finite endo-length $\C$-module and $\Omega$ be a chain of submodules of $M$.
  \begin{enumerate}
    \item
      For any $a \in \C$, there exists some $N \in \Omega$ such that
      $N (a) = \left( \bigcup_{X \in \Omega} X \right) (a)$.
    \item
      For any $a \in \C$, there exists some $N \in \Omega$ such that
      $N (a) = \left( \bigcap_{X \in \Omega} X \right) (a)$.
  \end{enumerate}
\end{lemma}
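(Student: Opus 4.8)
The key observation is that for a fixed object $a \in \C$, the module $M(a)$ has finite length over $k\End_\C(a)$, so any chain of $k\End_\C(a)$-submodules of $M(a)$ stabilizes. The plan is to reduce both statements to this fact.

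For part (1): Each $X \in \Omega$ gives a $k\End_\C(a)$-submodule $X(a) \subseteq M(a)$, and since $\Omega$ is a chain of $\C$-submodules, $\{X(a) \mid X \in \Omega\}$ is a chain of $k\End_\C(a)$-submodules of $M(a)$. Since $M(a)$ has finite length, this chain has a maximal element, i.e.\ there is some $N \in \Omega$ with $N(a) \supseteq X(a)$ for all $X \in \Omega$. Then $N(a) = \bigcup_{X \in \Omega} X(a) = \left(\bigcup_{X \in \Omega} X\right)(a)$, the last equality because unions and intersections of submodules are computed pointwise. Part (2) is entirely dual: $\{X(a) \mid X \in \Omega\}$ being a chain of finite-length submodules has a minimal element $N(a)$, and $N(a) = \bigcap_{X \in \Omega} X(a) = \left(\bigcap_{X \in \Omega} X\right)(a)$.

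The only point requiring a word of care is that a chain (totally ordered set) of submodules of a finite-length module indeed attains its supremum and infimum. This holds because in a module of length $\ell$, any strictly increasing (resp.\ decreasing) chain has at most $\ell+1$ terms; hence a chain with no maximal (resp.\ minimal) element would have to be finite yet unbounded, a contradiction — or more directly, the length function maps the chain into $\{0,1,\dots,\ell\}$ and one picks an element where it is maximized (resp.\ minimized), which is then the largest (resp.\ smallest) member of the chain since the chain is totally ordered by inclusion. I expect no real obstacle here; the statement is essentially a pointwise application of the Noetherian-plus-Artinian property that finite length provides, and the main thing to get right is the bookkeeping that passing to the value at $a$ commutes with forming unions and intersections of $\C$-submodules.
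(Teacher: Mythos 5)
Your proof is correct and takes essentially the same route as the paper's: fix $a$, note that $\{X(a)\mid X\in\Omega\}$ is a chain of $k\End_\C(a)$-submodules of the finite length module $M(a)$, conclude it attains a maximum (resp.\ minimum) $N(a)$, and use that unions and intersections of submodules are computed pointwise. The paper phrases the finite-length step as stabilization of the chain rather than via the length function, but the content is identical.
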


\begin{proof}
  We only prove (1). Let $a \in \C$. Then $M(a)$ is a finite length $k \End_\C (a)$-module. We observe that $X (a)$ is a $k \End_\C (a)$-submodule of $M(a)$ for any $X \in \Omega$. Therefore there exists some $N \in \Omega$ such that $N (a) = X (a)$ for any $X \supseteq N$ in $\Omega$. Then the result follows.
\end{proof}

\begin{lemma}\label{lem:dec}
  Let $M$ be a locally finite endo-length $\C$-module. Assume $\set{X_i \middle\vert i \in I}$ and $\set{Y_i \middle\vert i \in I}$ are chains of submodules of $M$ for some index set $I$, such that $M = X_i \oplus Y_i$ for any $i \in I$. Then $M = \left( \bigcup_{i \in I} X_i \right) \oplus \left( \bigcap_{i \in I} Y_i \right)$.
\end{lemma}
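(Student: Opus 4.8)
The plan is to reduce everything to a single object and then invoke Lemma~\ref{lem:chain}. Write $X = \bigcup_{i \in I} X_i$ and $Y = \bigcap_{i \in I} Y_i$; these are submodules of $M$ by the remark preceding Lemma~\ref{lem:chain}. Since direct sums, intersections, and sums of submodules in $\Mod \C$ are all computed pointwise, it suffices to prove that $M(a) = X(a) \oplus Y(a)$ for every $a \in \C$; summing over all objects then gives $M = X \oplus Y$.

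So fix $a \in \C$. First I would apply Lemma~\ref{lem:chain}: part~(1) yields an index $i_0 \in I$ with $X_{i_0}(a) = X(a)$, and part~(2) yields an index $i_1 \in I$ with $Y_{i_1}(a) = Y(a)$. Because the union and intersection of a chain of submodules are computed pointwise, $X_j(a) \subseteq X_{i_0}(a)$ and $Y_{i_1}(a) \subseteq Y_j(a)$ for all $j \in I$; in particular $X_{i_1}(a) \subseteq X_{i_0}(a)$ and $Y_{i_1}(a) \subseteq Y_{i_0}(a)$. Now I would compare the two decompositions $M(a) = X_{i_0}(a) \oplus Y_{i_0}(a)$ and $M(a) = X_{i_1}(a) \oplus Y_{i_1}(a)$: given $x \in X_{i_0}(a)$, write $x = x' + y'$ with $x' \in X_{i_1}(a)$, $y' \in Y_{i_1}(a)$ using the second; then $y' = x - x' \in X_{i_0}(a) \cap Y_{i_0}(a) = 0$, so $x = x' \in X_{i_1}(a)$. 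Hence $X_{i_0}(a) = X_{i_1}(a)$, and therefore $M(a) = X_{i_1}(a) \oplus Y_{i_1}(a) = X(a) \oplus Y(a)$, as required.

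I do not expect a serious obstacle; the only point that needs care is that the indices $i_0$ and $i_1$ delivered by Lemma~\ref{lem:chain} need neither coincide nor be comparable in either chain, so the argument must use only that $X_{i_0}(a)$ is largest among the $X_j(a)$ and $Y_{i_1}(a)$ smallest among the $Y_j(a)$, together with the given pointwise direct sum decompositions — which is exactly what the displayed comparison exploits.
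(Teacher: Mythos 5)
Your proof is correct and follows essentially the same route as the paper: both reduce to the pointwise statement $M(a) = X(a) \oplus Y(a)$ via Lemma~\ref{lem:chain}. The only difference is in reconciling the two indices: the paper totally orders $I$ by inclusion of the $X_i$ and evaluates both chains at $j = \max\set{j_1, j_2}$, whereas you compare the decompositions at $i_0$ and $i_1$ directly by an element computation — both work, and yours avoids introducing the order on $I$.
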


\begin{proof}
  Given $i, j \in I$, we let $i \leq j$ if $X_i \subseteq X_j$. Then $I$ becomes a totally ordered set.
  We observe that $X_i \subseteq X_j$ if and only if $Y_i \supseteq Y_j$, since $X_i \oplus Y_i = M = X_j \oplus Y_j$.

  For each $a \in \C$, Lemma~\ref{lem:chain} implies that there exist some $j_1, j_2 \in I$ such that $X_{j_1} (a) = \left( \bigcup_{i \in I} X_i \right) (a)$ and $Y_{j_2} (a) = \left( \bigcap_{i \in I} Y_i \right) (a)$. We set $j = \max \set{j_1, j_2}$.
  Then we have that $X_j (a) = X_{j_1} (a) = \left( \bigcup_{i \in I} X_i \right) (a)$ and $Y_j (a) = Y_{j_2} (a) = \left( \bigcap_{i \in I} Y_i \right) (a)$.
  Since $M(a) = X_j (a) \oplus Y_j (a)$, we have that
  $M(a) = \left( \bigcup_{i \in I} X_i \right) (a) \oplus \left( \bigcap_{i \in I} Y_i \right) (a)$.
  It follows that
  $M = \left( \bigcup_{i \in I} X_i \right) \oplus \left( \bigcap_{i \in I} Y_i \right)$.
\end{proof}

We first show that each nonzero locally finite endo-length $\C$-module admits an indecomposable direct summand.

\begin{lemma}\label{lem:indec.summand}
  Let $M$ be a locally finite endo-length $\C$-module. For any $a \in \C$ with $M(a) \neq 0$, there exists an indecomposable direct summand $N$ of $M$ such that $N(a) \neq 0$.
\end{lemma}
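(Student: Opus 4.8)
The plan is to locate the desired summand by a Zorn's lemma argument on a poset of complemented pairs, and then to show that minimality forces indecomposability. Concretely, I would consider the set $\T$ of all pairs $(N, N')$ of submodules of $M$ with $M = N \oplus N'$ and $N(a) \neq 0$, partially ordered by declaring $(N_1, N_1') \leq (N_2, N_2')$ whenever $N_1 \subseteq N_2$ and $N_1' \supseteq N_2'$. This set is nonempty, since $(M, 0) \in \T$ by the hypothesis on $a$.

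The first key step is to verify that every chain in $\T$ admits a lower bound in $\T$. Given a chain $\set{(N_i, N_i') \middle\vert i \in I}$, both $\set{N_i \middle\vert i \in I}$ and $\set{N_i' \middle\vert i \in I}$ are chains of submodules of $M$, and $M = N_i' \oplus N_i$ for each $i$; hence Lemma~\ref{lem:dec} applies, with the $N_i'$ playing the role of the $X_i$ and the $N_i$ that of the $Y_i$, and yields $M = \left( \bigcup_{i \in I} N_i' \right) \oplus \left( \bigcap_{i \in I} N_i \right)$. Moreover, by Lemma~\ref{lem:chain}(2) there is some $i_0 \in I$ with $N_{i_0} (a) = \left( \bigcap_{i \in I} N_i \right) (a)$, so the latter is nonzero. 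Thus $\left( \bigcap_{i \in I} N_i, \bigcup_{i \in I} N_i' \right)$ lies in $\T$ and is a lower bound for the chain. By Zorn's lemma, $\T$ has a minimal element $(N, N')$.

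It then remains to check that this $N$ is indecomposable; it is nonzero since $N(a) \neq 0$. Suppose $N = P \oplus Q$ with $P$ and $Q$ nonzero submodules. Then $M = P \oplus (Q \oplus N')$, and since $N(a) = P(a) \oplus Q(a) \neq 0$ we may assume $P(a) \neq 0$, so $(P, Q \oplus N') \in \T$; as $P \subseteq N$ and $Q \oplus N' \supseteq N'$, this pair is $\leq (N, N')$, hence equals $(N, N')$ by minimality, forcing $Q = 0$, a contradiction. Therefore $N$ is an indecomposable direct summand of $M$ with $N(a) \neq 0$, as required.

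The main obstacle — and the reason for working with pairs rather than with direct summands alone — is the chain step: a descending chain of direct summands need not admit a compatibly ascending chain of complements, so one cannot simply intersect a chain of summands and expect a summand. Packaging each summand together with a chosen complement, and ordering the pairs so that both coordinates vary monotonically along a chain, is exactly what makes Lemma~\ref{lem:dec} applicable and sidesteps this difficulty; the remaining points are routine bookkeeping with transitivity of direct summands.
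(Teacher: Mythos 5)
Your proof is correct and is essentially the paper's own argument: the paper works with pairs $(X,Y)$ satisfying $M = X \oplus Y$ with the nonvanishing condition placed on the complement $Y$ and extracts a \emph{maximal} element, which is exactly your poset after swapping the two coordinates and reversing the order. The chain step via Lemma~\ref{lem:dec} together with Lemma~\ref{lem:chain}, and the final minimality (resp.\ maximality) argument forcing indecomposability, coincide with the paper's.
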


\begin{proof}
  Let $\S$ be the set of pairs $(X, Y)$, where $X$ and $Y$ are submodules of $M$ with $M = X \oplus Y$ and $Y(a) \neq 0$.
  We observe that $\S$ is nonempty since $(0, M) \in \S$.
  We let $(X, Y) \leq (X', Y')$ if $X \subseteq X'$ and $Y \supseteq Y'$. Then $\S$ becomes a poset.

  Let $\T = \set{(X_i, Y_i) \middle\vert i \in I}$ be a chain in $\S$, for some index set $I$.
  Then $\set{X_i \middle\vert i \in I}$ and $\set{Y_i \middle\vert i \in I}$ are chains of submodules of $M$.
  We set $X = \bigcup_{i \in I} X_i$ and $Y = \bigcap_{i \in I} Y_i$.
  It follows form Lemma~\ref{lem:dec} that $M = X \oplus Y$.
  We observe by Lemma~\ref{lem:chain} that $Y(a) \neq 0$. Then $(X, Y)$ is an upper bound of $\T$ in $\S$.

  By Zorn's lemma, there exists a maximal element $(L, N)$ in $\S$.
  We claim that $N$ is indecomposable. Indeed, assume submodules $N'$ and $N''$ of $N$ satisfy $N = N' \oplus N''$. We observe that at least one of $N'(a)$ and $N''(a)$ is nonzero, since $N(a) \neq 0$. We may assume $N''(a) \neq 0$. Then $(L \oplus N', N'')$ is an element greater than or equal to $(L, N)$ in $\S$. Since $(L, N)$ is maximal in $\S$, we have that $(L \oplus N', N'') = (L, N)$. In particular, $N' = 0$ and $N'' = N$.
  It follows that $N$ is an indecomposable direct summand of $M$ with $N(a) \neq 0$.
\end{proof}

Then we show the existence of indecomposable decomposition for a locally finite endo-length $\C$-module.

\begin{proposition}\label{prop:dec}
  Each locally finite endo-length $\C$-module $M$ admits a decomposition $M = \bigoplus_{i \in \Lambda} M_i$ such that each $M_i$ is indecomposable.
\end{proposition}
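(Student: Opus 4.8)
The plan is to use Zorn's lemma on the set of independent families of indecomposable direct summands, exactly the way one proves existence of decompositions in the classical (e.g. semisimple or Krull--Schmidt) setting, but with Lemma~\ref{lem:dec} doing the work that finiteness usually does. Concretely, I would consider the set $\Sigma$ whose elements are pairs $(\Lambda, \set{M_i \mid i \in \Lambda})$ consisting of a family of nonzero indecomposable submodules $M_i$ of $M$ such that the sum $\sum_{i \in \Lambda} M_i$ is direct and is a direct summand of $M$; say $M = \left( \bigoplus_{i \in \Lambda} M_i \right) \oplus C$ for some complement $C$. Order $\Sigma$ by extension of families (together with the requirement that the new complement be contained in the old one). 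Then $\Sigma$ is nonempty (the empty family, with complement $M$, belongs to it) and the goal is to produce a maximal element and argue its complement is zero.

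The verification of the hypotheses of Zorn's lemma is the heart of the argument. Given a chain $\set{(\Lambda_t, \set{M_i \mid i \in \Lambda_t}) \mid t \in T}$ in $\Sigma$ with complements $C_t$, I would take $\Lambda = \bigcup_t \Lambda_t$ and the evident union family; the sum $\bigoplus_{i \in \Lambda} M_i$ is still direct because directness is a finitary condition. The subtle point is that $\bigoplus_{i \in \Lambda} M_i$ is again a direct summand of $M$, and this is precisely where Lemma~\ref{lem:dec} enters: writing $X_t = \bigoplus_{i \in \Lambda_t} M_i$, the $X_t$ form a chain of submodules, the $C_t$ form a (reverse) chain of submodules, and $M = X_t \oplus C_t$ for every $t$, so Lemma~\ref{lem:dec} gives $M = \left( \bigcup_t X_t \right) \oplus \left( \bigcap_t C_t \right) = \left( \bigoplus_{i \in \Lambda} M_i \right) \oplus C$ with $C = \bigcap_t C_t$. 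Hence the union pair lies in $\Sigma$ and is an upper bound for the chain.

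By Zorn's lemma $\Sigma$ has a maximal element $(\Lambda, \set{M_i \mid i \in \Lambda})$ with complement $C$, so $M = \left( \bigoplus_{i \in \Lambda} M_i \right) \oplus C$. It remains to see $C = 0$. If $C \neq 0$, pick $a \in \C$ with $C(a) \neq 0$; since $C$ is itself a locally finite endo-length $\C$-module (being a summand of $M$, and $\modlf \C$ is closed under summands), Lemma~\ref{lem:indec.summand} applied to $C$ yields an indecomposable direct summand $N$ of $C$ with $N(a) \neq 0$, say $C = N \oplus C'$. Then adjoining $N$ to the family and replacing $C$ by $C'$ produces a strictly larger element of $\Sigma$, contradicting maximality. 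Therefore $C = 0$ and $M = \bigoplus_{i \in \Lambda} M_i$ is the desired indecomposable decomposition. I expect the main obstacle to be the bookkeeping in the chain step — making sure the partial order on $\Sigma$ is set up so that a chain really does produce chains of submodules $X_t$ and $C_t$ to which Lemma~\ref{lem:dec} applies — but no genuinely new idea beyond Lemmas~\ref{lem:dec} and~\ref{lem:indec.summand} should be needed.
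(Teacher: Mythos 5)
Your proposal is correct and follows essentially the same route as the paper: Zorn's lemma applied to pairs (family of indecomposable summands with direct sum complemented, complement), with Lemma~\ref{lem:dec} handling the chain step and Lemma~\ref{lem:indec.summand} supplying a new indecomposable summand of a nonzero complement to contradict maximality. The only cosmetic difference is that the paper indexes families by subsets of the set of indecomposable direct summands of $M$ rather than by abstract index sets.
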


\begin{proof}
  Denote by $\Omega$ the set of indecomposable direct summands of $M$. Let $\S$ be the set of pairs $(I, Y)$, where $I$ is a subset of $\Omega$ and $Y$ is a submodule of $M$ with $M = \left( \bigoplus_{X \in I} X \right) \oplus Y$.
  We observe that $\S$ is nonempty since $(\emptyset, M) \in \S$.
  We let $(I, Y) \leq (I', Y')$ if $I \subseteq I'$ and $Y \supseteq Y'$. Then $\S$ becomes a poset.

  Let $\T = \set{(I_j, Y_j) \middle\vert j \in J}$ be a chain in $\S$, for some index set $J$.
  We observe that $\set{\bigoplus_{X \in I_j} X \middle\vert j \in J}$ and $\set{Y_j \middle\vert j \in J}$ are chains of submodules of $M$.
  We set $I_* = \bigcup_{j \in J} I_j$ and $Y_* = \bigcap_{j \in J} Y_j$.
  We observe that $\bigoplus_{X \in I_*} X = \bigcup_{j \in J} \left( \bigoplus_{X \in I_j} X \right)$.
  It follows form Lemma~\ref{lem:dec} that $M = \left( \bigoplus_{X \in I_*} X \right) \oplus Y_*$.
  Then $(I_*, Y_*)$ is an upper bound of $\T$ in $\S$.

  By Zorn's lemma, there exists a maximal element $(I, Y)$ in $\S$.
  We claim that $Y = 0$.
  Otherwise, Lemma~\ref{lem:indec.summand} implies that there exist some indecomposable direct summand $Y'$ of $Y$. Assume $Y = Y' \oplus Y''$ for some submodule $Y''$ of $Y$. We observe that $Y'$ is an indecomposable direct summand of $M$. Then $(I \cup \set{Y'}, Y'') $ is an element greater than $(I, Y)$ in $\S$, which is a contradiction.

  Hence $Y = 0$ and then $M = \bigoplus_{X \in I} X$.
  We observe that each $X \in I$ is an indecomposable direct summand of $M$.
  Then the result follows.
\end{proof}

Let $M$ be a $\C$-module. For any endomorphism $f \colon M \to M$, we have two chains $\set{\Ker f^n \middle\vert n \geq 0}$ and $\set{\Im f^n \middle\vert n \geq 0}$ of submodules of $M$. We set $\Ker f^\infty = \bigcup_{n \geq 0} \Ker f^n$ and $\Im f^\infty = \bigcap_{n \geq 0} \Im f^n$.

The following result is essentially contained in
\cite[Section~3.6 and Remark~3.8.5]{GabrielRoiter1992Representations};
compare \cite[Corollary~15]{AngeleriHugeldelaPena2009Locally}.

\begin{proposition}\label{prop:indec}
  The endomorphism algebra of an indecomposable locally finite endo-length $\C$-module is local.
\end{proposition}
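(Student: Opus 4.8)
The plan is to verify the standard criterion for locality: a nonzero ring $R$ is local if and only if for every $x \in R$ at least one of $x$ and $1-x$ is invertible (equivalently, the non-units of $R$ form a two-sided ideal). An indecomposable $\C$-module $M$ is nonzero, so $\End_{\Mod\C}(M) \ne 0$, and it therefore suffices to show that for every endomorphism $f$ of $M$ either $f$ or $\1_M - f$ is an automorphism.

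Fix such an $f$. The heart of the matter is a Fitting-type decomposition
\[ M = \Ker f^\infty \oplus \Im f^\infty, \]
which I would establish pointwise. For $a \in \C$ the $k\End_\C(a)$-module $M(a)$ has finite length, so the ascending chain $(\Ker f_a^n)_n$ and the descending chain $(\Im f_a^n)_n$ stabilize, say from $n = n_a$ on, and the classical Fitting lemma for finite-length modules yields $M(a) = \Ker f_a^{n_a} \oplus \Im f_a^{n_a}$. Since kernels, images, unions and intersections in $\Mod\C$ are computed pointwise, $(\Ker f^\infty)(a) = \bigcup_n \Ker f_a^n = \Ker f_a^{n_a}$ and $(\Im f^\infty)(a) = \bigcap_n \Im f_a^n = \Im f_a^{n_a}$ (the stabilization being an instance of Lemma~\ref{lem:chain}). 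Hence $M(a) = (\Ker f^\infty)(a) \oplus (\Im f^\infty)(a)$ for every $a$, and since $\Ker f^\infty$ and $\Im f^\infty$ are honest $\C$-submodules of $M$, this gives the displayed decomposition of $\C$-modules.

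Because $M$ is indecomposable, one summand is $M$ and the other is $0$. If $\Ker f^\infty = 0$, then $\Ker f \subseteq \Ker f^\infty = 0$ makes $f$ a monomorphism, while $\Im f \supseteq \Im f^\infty = M$ makes $f$ an epimorphism, so $f$ is an automorphism of $M$. If instead $\Im f^\infty = 0$, then for each $a$ the descending chain $(\Im f_a^n)_n$ reaches $0$, so $f_a^{n_a} = 0$ and $\1_{M(a)} - f_a$ is invertible with inverse $\sum_{j < n_a} f_a^j$; thus $\1_M - f$ is an automorphism. This verifies the criterion and proves the proposition. The step I expect to need the most care is the passage from the pointwise Fitting decompositions to a global direct-sum decomposition of $\C$-modules: this is exactly where the locally finite endo-length hypothesis and Lemma~\ref{lem:chain} enter, and one should note that in the second case $f$ is only \emph{locally} nilpotent — the index $n_a$ genuinely depends on $a$ — which is nonetheless enough to invert $\1_M - f$ componentwise.
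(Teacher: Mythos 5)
Your proof is correct and follows essentially the same route as the paper: the pointwise Fitting decomposition $M = \Ker f^\infty \oplus \Im f^\infty$ via the finite length of each $M(a)$ over $k\End_\C(a)$, followed by the dichotomy that $f$ is an automorphism or $\1_M - f$ is inverted componentwise by a geometric series. The only implicit ingredient worth flagging is that each $f_a$ is a morphism of $k\End_\C(a)$-modules (so that the kernel and image chains are submodule chains of a finite-length module), which the paper records in its setup and your argument uses tacitly.
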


\begin{proof}
  Let $M$ be an indecomposable locally finite endo-length $\C$-module and let $f \colon M \to M$ be an endomorphism.
  For any $a \in \C$, we have that $M(a)$ is a finite length $k \End_\C (a)$-module. For any $\C$-submodule $N$ of $M$, we observe that $N(a)$ is a $k \End_\C (a)$-submodule of $M(a)$.
  Then Fitting's lemma implies that there exists some positive integer $n_a$ such that
  \[
    M(a) = \Ker f^{n_a} (a) \oplus \Im f^{n_a} (a)
    = \Ker f^\infty (a) \oplus \Im f^\infty (a).
  \]
  It follows that $M = \Ker f^\infty \oplus \Im f^\infty$.
  Since $M$ is indecomposable, then either $\Ker f^\infty$ or $\Im f^\infty$ is zero.

  If $\Ker f^\infty = 0$, then $f_a$ is a bijection for any $a \in \C$, which makes $f$ an isomorphism.
  If $\Im f^\infty = 0$, then $f_a$ is nilpotent for any $a \in \C$. Then $(\1_M-f)_a$ is a bijection with the inverse $\sum_{i=0}^{n_a} f_a^i$. This makes $\1_M-f$ an isomorphism.
  That is to say, either $f$ or $\1_M-f$ is an isomorphism.
  Then the result follows.
\end{proof}

Now, we can complete the proof of Theorem~\ref{thm:dec}.
We observe that Proposition~\ref{prop:dec} gives a decomposition $M = \bigoplus_{i \in \Lambda} M_i$ such that each $M_i$ is indecomposable.
Proposition~\ref{prop:indec} implies that each $\End_{\Mod \C} (M_i)$ is local.
Then Theorem~\ref{thm:dec} follows.

\begin{remark}
  \begin{enumerate}
    \item
      We call a $\C$-module $M$ \emph{locally finite length} if $M(a)$ is of finite length as $k$-module for any $a \in \C$. Since $k \End_\C (a)$ is a $k$-algebra, then $l_k M(a) \geq l_{k \End_\C (a)} M(a)$. Hence $M$ lies in $\modlf \C$.

      Specially, if $k$ is a field and $\C$ is the categorification of a quiver $Q$, then $M$ is just a locally finite dimensional representation of $Q$.
      In this case, Theorem~\ref{thm:dec} for $M$ may be known to experts.
    \item
      Let $R$ be a ring with $R = \bigoplus_{a \in \Lambda} e_a R = \bigoplus_{a \in \Lambda} R  e_a$, where $\set{e_a \middle\vert a \in \Lambda}$ is a family of pairwise orthogonal idempotents. Then $R$ is a $\Z$-algebra (needs not contain identity).
      Assume $R$ is locally bounded (i.e., $e_a R$ and $R e_a$ are of finite $R$-length for each $a \in \Lambda$).

      We construct a category $\C$ as follows. We choose $\Lambda$ as the set of objects. For a pair of objects $a$ and $b$, we set $\Hom_\C (a, b) = e_b R e_a$. The composition of $r_1 \colon a \to b$ and $r_2 \colon b \to c$ is given by $r_2 \circ r_1 = r_2 r_1$. Then $\C$ forms a $\Z$-linear category and $\1_a = e_a$ for any object $a$ in $\C$.

      We mention that $\End_\C (a) = e_a R e_a$ is an artinian ring for any object $a$ in $\C$;
      see \cite[Proposition~3]{AngeleriHugeldelaPena2009Locally}.
      Then a locally finitely generated unitary right $R$-module $M$ in the sense of
      \cite{AngeleriHugeldelaPena2009Locally} is a locally finite endo-length $\C^\op$-module.
      In this case, Theorem~\ref{thm:dec} for $M$ coincides with
      \cite[Main Theorem(a)]{AngeleriHugeldelaPena2009Locally}.
  \end{enumerate}
\end{remark}

\section*{Acknowledgements}

The author thanks Professor~Yu Ye for the direction on the proof and thanks Professor~Xiao-Wu Chen for some suggestions.


\end{document}